\thanks{Virtual Plants INRIA, UMR DAP, CIRAD, TA A-96/02, 34398 Montpellier Cedex 5, France, farcot@sophia.inria.fr}%
\thanks{COMORE INRIA, Unit\'{e} de recherche Sophia Antipolis Méditerranée , 2004 route des Lucioles, BP 93, 06902 Sophia Antipolis, France, gouze@sophia.inria.fr}
\newcommand{\ee}{{\bf e}}
\newcommand{\dd}{{\sf d}}
\newcommand{\R}{\mathbf{R}}
\newcommand{\kk}{\kappa }
\newcommand{\D}{\mathscr{D}}
\newcommand{\Dd}{{\cal D}}
\newcommand{\A}{\mathcal{A}}
\newcommand{\T}{T}%{\mathcal{T}}
\newcommand{\TT}{\mathbf{T}}
\newcommand{\zo}{\mathcal{Z}}
\newcommand{\ie}{i.e. }%%{\it i.e. }}
\newcommand{\intr}[1]{{\rm int}\left(#1\right)} %%{\mathring{#1}} 
\newcommand{\cl}[1]{\overline{#1}} 
\newcommand{\sgn}{{\rm sign}\,}
\newcommand{\NN}[1]{\{1\cdots #1\}}
\newcommand{\Nm}[1]{\{0\cdots #1-1\}}
\newtheorem{assum}{Assumption}
\newtheorem{prop}{Proposition}
\newtheorem{theo}{Theorem}
\newtheorem{rem}{Remark}
\begin{document}
\RRNo{6875}
\makeRR   % cas d'un rapport de recherche
%% \makeRT % cas d'un rapport technique.
%% a partir d'ici, chacun fait comme il le souhaite

%{\bf Keywords:} piecewise linear dynamical systems ; periodic trajectories ; monotone, concave maps ; interaction graph ; genetic network models\\

%{\bf 2000 MSC subject classification:} 34C25, 34C12, 37N25.\\

\section{Preliminaries}\label{sec-intro} 
It has been discovered in the 1960's that some proteins can regulate (i.e. activate or inhibit) the expression of genes in a living organism. Since proteins are the product of gene expression, feedback appears to be at the core of this process. Moreover, the unprecedented developments of cell biology in the last decades has led to a view where gene regulation involves huge numbers of elements (genes, mRNAS, proteins ...), interacting in a nonlinear way. It is thus clear today that mathematical models and tools are required to analyse these complex systems. Several classes of models have been proposed to describe gene regulation. Although stochastic models undoubtedly have a major role to play, we will focus in this note on deterministic models. They can be divided in two classes : models based on differential equations \cite{murray}, and discrete models based on a representation by a finite number of states \cite{kauffman,thodari}. Both classes have their complementary advantages, the most obvious being relative to the accuracy vs. tractability dilemma. First proposed by L. Glass \cite{glasstopkin}, piecewise linear (in fact piecewise affine) models appear to be an efficient intermediate between the two previous classes. Thus, the sometimes called 'Glass systems' have been both used to model real gene networks \cite{bacillus,ropers}, and studied mathematically. The present paper is more related to this second type of work.\\
The analysis of periodic solutions in piecewise affine gene network models is well characterised in the special case where all decay rates are supposed equal \cite{glasspastern2,edwards,farcot,periodsol}. With non uniform decay rates however, very few results are known, and the techniques developed for homogeneous decay rates cannot be generalised. This lack of results is unfortunate, since real gene networks are known to involve very distinct degradation rates, both among mRNAs and proteins. Moreover, the question is also relevant from a more mathematical viewpoint, since systems with uniform decay rates display appreciably simpler dynamics than systems with distinct decays. Actually, in the uniform decay rate setting, trajectories are locally straight lines, whereas distinct decays lead to pieces of exponential curves. As an illustration of this, a 3-dimensional example of chaotic behaviour has been provided \cite{chaosglass3d}, whereas chaos in a uniform decay rates setting has been proved to require at least 4 dimensions \cite{periodsol}.\\
In a previous paper~\cite{negloop,actabio}, we have successfully used tools from the theory of monotone systems and operators to tackle this problem. This has lead us to prove the existence of a unique stable limit cycle in an invariant region of state space that is periodically crossed by flow lines. This result holds only for a particular class of systems, namely those having a single negative loop as an interaction graph.\\
However, the essential property of these systems that was mathematically used in the proof was a geometric condition. It was expressed as an alignment condition satisfied by pairs of successive focal points (see below for a precise definition), which was shown to be always true in the case when the interaction graph is a negative loop involving all variables of the system. In this paper, we generalise our previous results to a setting where the main assumption is only this alignment of successive focal points, regardless of the interaction structure. It is possible to show that this alignment assumption necessarily holds when each node in the interaction graph has outgoing degree 1. But the notion of interaction graph is essentially local, and the previous condition only has to hold at each point in state space (or in a region of interest in state space). This includes of course feedback loops, but also many other interaction structures, including systems whose global interaction graph has a complex form, as some examples will illustrate.\\
 Since all these results require some technicalities, which have been partly addressed in \cite{negloop,actabio}, the proofs provided in this paper are not entirely self-contained, but the missing parts appear in the cited references.\\
The following section introduces the class of piecewise affine models of gene networks. Then in a next section, a general fixed point theorem for monotone systems is recalled, and applied to a class of piecewise affine systems. It is illustrated on several examples in a final section.

\section{Piecewise affine models}\label{sec-pwaff}
\subsection{Definitions and notations}\label{sec-defs}
In this section we recall basic facts about piecewise affine models \cite{glasstopkin,edwards,farcot,inria}. The general form of these models can be written as:
\begin{equation}\label{eq-genenet}
\frac{dx}{dt} = \kk(x) - \Gamma x
\end{equation}
The variables $(x_1\dots x_n)$ represent concentrations in proteins or mRNA produced from $n$
interacting genes. By abuse of language, the $n$ elements of the system will be called {\it genes} in the sequel. 
Since gene transcriptional regulation is often considered to follow a steep
sigmoid law, it has been suggested that idealised, discontinuous switches may be used instead
to model these complex systems~\cite{glasstopkin}. Accordingly, let us denote ${\sf s}^+(\cdot\,,\theta):\R\to \{0,1\}$ 
the increasing step function: 
\[
\left\{\begin{array}{lcl}
{\sf s}^+(x,\theta) & = & 0\quad\text{if } x<\theta,\\
{\sf s}^+(x,\theta) & = & 1\quad\text{if } x>\theta,
\end{array}\right.
\]
which represent an effect of activation. Also, ${\sf s}^-(x,\theta)=1-{\sf s}^+(x,\theta)$, is its decreasing version, and represents inhibition. Unless further precision are given, we leave this function undefined at its threshold value $\theta$.\\
The map $\kk:\R_+^n\to\R^n_+$ depends on $x$ only via step functions of the form ${\sf s}^\pm(x_i,\theta_{i})$, and is thus piecewise constant. $\Gamma\in\R_+^{n\times n}$ is a diagonal matrix whose diagonal entries $\Gamma_{ii}=\gamma_i$, are degradation rates of variables in the system.\\ 
As a concentration, each variable $x_i$ is nonnegative and bounded. When $x_i$ reaches a threshold value, this has an instantaneous effect on the system: for some $j$, the value of $\kk_j$ changes. In other words, a step function ${\sf s}^\pm(x_i,\theta_i)$ appears in the expression of the function $\kk_j$. For each $i\in\NN{n}$ let us define a finite set of threshold values:
\begin{equation}\label{eq-thresh}
\Theta_i=\{\theta_{i}^0,\dots,\theta_{i}^{q_i}\},
\end{equation}
where the thresholds are ordered: $\theta_{i}^0=0<\theta_{i}^1<\dots<\theta_{i}^{q_i-1}<\theta_i^{q_i}$.  The extreme values $0$ and $\theta_i^{q_i}$ are not thresholds but bounds on $x_i$'s value, but it will convenient to denote them as elements of $\Theta_i$.\\ 
Now, each axis of the state space can be partitioned into open segments between thresholds. Since the extreme values will not be crossed by the flow (see later), the first and last segments include one of their endpoints~:
\begin{equation}\label{eq-Di}
% \forall i\in\NN{n}, \;
{\Dd}_i\in \left\{[\theta_i^0,\,\theta_i^1),\;\{\theta_i^1\},\;(\theta_i^1,\,\theta_i^{2}),\;\{\theta_i^2\},\cdots,\{\theta_i^{q_i-1}\},\; (\theta_i^{q_i-1},\,\theta_i^{q_i}]\right\}
%\left\{(\theta_i^j,\,\theta_i^{j+1})\,|\,j\in\NN{q_i-2}
%\right\} 
\end{equation}
Each product ${\Dd}=\prod_{i=1}^n{\Dd}_i$ defines a rectangular {\it domain}, whose dimension
is the number of ${\Dd}_i$ that are not singletons. When $\dim{\Dd}=n$, one usually says that
it is a {\em regulatory domain}, or {\em regular domain}, and those domains with lower
dimension are called {\em switching domains}~\cite{inria}. We use the notation $\D$ to represent the set
of all domains of the form $\Dd$ above, and $\D_r$ (resp. $\D_s$) for the set of all regulatory (resp. switching) domains. 
Since $\D_r$ is composed of finitely many domains, it will be convenient to identify it with
\begin{equation}\label{eq-A}
\A = \prod_{i=1}^n\Nm{q_i}.
\end{equation}
In the rest of the paper, this will lead us to use formulations like 'in a domain $a$'. This identification can be described by a map $\dd:\D_r\to\A$, $\dd\left(\prod_i (\theta_i^{a_i-1},\,\theta_i^{a_i})\right)=(a_1-1\dots a_n-1)$.\\ 
The dynamics on regular domains, or {\it regular dynamics}, is easy to describe, see next section. On sets of $\D_s$ on the other hand, the flow is in general not uniquely defined. This can be circumvented by using set-valued solutions and the theory of Filippov~\cite{casey,gouzesari}. However, in this paper we will not need this theory, thanks to some mild assumptions explained in the next section.

\subsection{Regular dynamics}\label{sec-regdyn}
On any regular domain $\Dd$, the production rate $\kk$ is constant, and thus equation~(\ref{eq-genenet}) is affine. Its solution is explicitly known, for each coordinate $i$~:
\begin{equation}\label{eq-flow}
\varphi_i(x,t)=x_i(t) = \frac{\kk_i}{\gamma_i} - e^{-\gamma_i t}\left(x_i -
\frac{\kk_i}{\gamma_i}\right),
\end{equation}
and is valid for all $t\in \R_+$ such that $x(t)\in{\Dd}$. One sees above that the point
\[
\phi({\Dd}) =
\left(\phi_1\cdots\phi_n\right)=\left(\frac{\kk_1}{\gamma_1}\cdots\frac{\kk_n}{\gamma_n}\right)
\]
It is an attractive equilibrium of the flow~(\ref{eq-flow}). Hence, if it lies inside $\Dd$, it is an
asymptotically stable steady state of system~(\ref{eq-genenet}). Otherwise, the flow will reach the boundary
$\partial{\Dd}$ in finite time, unless $\phi({\Dd})$ lies exactly on the  boundary of $\Dd$. However this
situation is clearly not generic, and thus one will always suppose in the following that:
\[
\forall\Dd\in\D_r,\quad \phi({\Dd})\in\D_r.
\] 
When the flow reaches $\partial{\Dd}$, the value of $\kk$ (and thus, of $\phi$) changes, and the flow changes its direction. The point $\phi({\Dd})$ is often called \textit{focal point} of the domain $\Dd$. Note that if $\dd(\Dd)=a$, we will often denote it $\phi(a)$, or $\phi^a$.\\
It follows that the continuous trajectories are entirely characterised by their successive intersections with the boundaries of regular domains, and that this sequence depends essentially on the position of focal points. However, the definition of trajectories on the boundary of regular domains requires further explanations. Let us describe the case of singular domains of dimension exactly $n-1$.  Let $W$ be a $n-1$ dimensional domain, intersecting the boundaries of two regular domains $\Dd$ and $\Dd'$. If in these two domains flow lines both point towards, or away from $W$ (i.e. the flow coordinate in the direction normal to $W$ has different signs in $\Dd$ and $\Dd'$) the latter is called respectively {\it black wall} or {\it white wall}. In both cases,  the Filippov theory~\cite{gouzesari}, or some other technique, is required. Otherwise, \ie when flow lines both cross $W$ in the same direction, one usually call it a {\it transparent wall}, and trajectories on $W$ can be defined by continuity, from $\Dd$ and $\Dd'$.\\
In the following, one will only deal with transparent walls. A simple criterion to ensure that all walls are transparent is the absence of auto-regulation, in the sense that no production term $\kk_i$ depends on $x_i$. Then, the only regions that are excluded are the singular domains of co-dimension $2$ or more, which form a rare set in state space (and even in $\D_s$).\\

Given the flow (\ref{eq-flow}) in a box ${\Dd}$ (of image $a$ in $\A$), it is easy to compute the time and position at which it intersects the boundary of ${\Dd}$, if ever. The position of the focal point with respect to thresholds determines entirely which walls can be reached: $\{x\,|\,x_i=\theta_{i}^{a_i-1}\}$ (resp. $\{x\,|\,x_i=\theta_{i}^{a_i}\}$) can be crossed if and only if $\phi_i < \theta_{i}^{a_i-1}$ (resp. $\phi_i>\theta_{i}^{a_i}$). Then, let us denote $I_{out}^+(a) = \{i\in\NN{n} | \,\phi_i >\theta_{i}^{a_i}\}$, and $I_{out}^-(a) = \{i\in\NN{n} | \,\phi_i <\theta_{i}^{a_i-1}\}$. Similarly, $I_{out}(a)=I_{out}^+(a)\cup I_{out}^- (a)$ is the set of escaping directions of $\Dd$. Also, the following pairs of functions will be convenient notations: $\theta_i^\pm:\A\to\Theta_i$, $\theta_i^-(a)=\theta_{i}^{a_i-1}$ 	and  $\theta_i^+(a)=\theta_{i}^{a_i}$.\\ 
When it is unambiguous, we will omit the dependence on $a$ in the sequel.\\ 
Now, in each direction $i\in I_{out}$ the time at which $x(t)$ encounters the corresponding hyperplane, for $x\in {\Dd}_a$, can easily be shown to be:
\begin{equation}\label{eq-taui}
\tau_i(x)=\frac{-1}{\gamma_i}\ln\left(\min\left\{\frac{\phi_i -\theta_{i}^-(a)}{\phi_i -x_i},
\frac{\phi_i -\theta_{i}^+(a)}{\phi_i -x_i}\right\}\right). 
\end{equation}
Then, % \begin{equation}\label{eq-tau}
$\tau(x)=\min_{i\in I_{out}}\tau_i(x)$,
% \end{equation}
is the time at which the boundary is crossed by the trajectory originated at $x$. Then, the escaping point of ${\Dd}$ from initial condition $x$ takes the form $\varphi(x,\tau(x))$. Since this will be repeated along trajectories, $x$ will generally lie on the boundary of the current box, except for the initial condition, which may however be supposed to lie on a wall without loss of generality. In this way, one defines a \textit{ transition map} ${\T}^a: \partial \Dd\rightarrow \partial \Dd$:
\begin{equation}\label{eq-maptrans}
\begin{array}{lcl}
{\T}^ax & = & \varphi\left(x,\tau(x)\right)\\
          & = & \phi + \alpha(x) (x-\phi).
\end{array}
\end{equation}
where $\alpha(x) = \exp(-\tau(x)\Gamma)$. The latter depends on $a$, as seen
from~(\ref{eq-taui}).\\ 
The map above is defined locally, in a domain $a$. However, under our assumption that all considered walls are transparent, any wall can be unequivocally considered as escaping in one of the two regular domains in bounds, and incoming in the other. Hence, on any point of the interior of a transparent wall, there is no ambiguity on which $a$ to chose in expression~(\ref{eq-maptrans}). In other words, there is a well defined global transition map on the union of transparent walls. Let us denote this map $\T$. \\
Now, the initial system~(\ref{eq-genenet}) may been reduced to a discrete time dynamical system $({\rm Dom}_{\,\T},\T)$, where ${\rm Dom}_{\,\T}$ is the subset of  $n-1$ domains of $\D_s$ where all iterates of $\T$ are defined. From the previous discussions, it appears that ${\rm Dom}_{\,\T}$ is the union of all (transparent) walls, minus the union of all finite-time preimages (i.e. finite number of backward iterates of $\T$) of $n-2$ dimensional (or less) singular domain. The topology of this domain is not trivial in general, and is described with more detail in~\cite{farcot}.\\ 
Now, we will focus on situations where there is a wall $W$, and a sequence $a^1\dots a^\ell$ of regular domains such that $({\T}^{a^\ell}\circ{\T}^{a^{\ell-1}}\cdots\circ{\T}^{a^1}) (W)\cap W\ne \varnothing$.  Then, fixed points of such an iterate are equivalent to periodic trajectories in the original system~(\ref{eq-genenet}).\\

% \subsection{Monotone and concave operators}\label{sec-monot}
% The main result of this paper strongly rely on a fixed point theorem, that we recall here. This theorem concerns monotone, concave operators. Both monotonicity and concavity are defined with respect to some partial order, see below conditions $(M)$ and $(C)$. Operators with both these properties have lead to various results on existence and uniqueness of fixed points. Among early studies, Krasnosel'skii
% has provided important results~\cite{krasno}. Here, we use a more recent presentation of such results,
% due to Smith~\cite{smith}, whose formulation is well adapted to our purpose and context. Namely, we use
% theorem 2.2 of~\cite{smith}.\\

\section{An alternative theorem for convergence}\label{sec-theory} 
Let us state some general notations. We denote $x<y$ and $x\leqslant y$ if these inequalities hold for each coordinate (resp. entry) of vectors (resp. matrices) $x$ and $y$. We call this order the partial order. Then, we denote $x\lneq y$ if $x\leqslant y$ and $x \neq y$. For $x\leqslant y$, $[x,y]=\{z\,|\, x\leqslant z\leqslant y\}$, and $(x,y) = \{ z \,|\, x <z< y\}$. For any set $A$, by $\intr{A}$ we denote the interior of $A$, and by $\cl{A}$ its closure.\\

\subsection{Theorems for monotone systems}
Let us state first the main theorem we want to apply. It is a fixed point theorem for monotone and concave operators, with respect to the partial order. Many variants of this theorem have been proposed since early works, more than 50 years ago. The form we use is due to Smith \cite{smith}. In words, it states that a monotone and concave map on a compact domain of $\R_+^n$ may have either the origin as a unique fixed point, or a unique positive (for the partial order) and attracting fixed point. The second case happens when either the origin is an unstable fixed point, or when it is not a fixed point at all. Let us now state this theorem:
\begin{theo}\label{thm-smith} Let $p\in{\R}^n_+$, $p>0$, and $T:[0,p] \to [0,p]$
continuous, $C^1$ in $(0,p)$.\\ 
Suppose $DT(0)=\displaystyle\lim_{\substack{x\to 0\\ x>0}} DT(x)$ exists.
%{\tiny\begin{array}{l}x\to 0\\ x>0\end{array}}
Assume: 
\begin{itemize} 
	\item[] $\qquad\qquad\qquad$(M) $\qquad DT(x) > 0$ if $\;x>0$, $x<p$. 
	\item[] $\qquad\qquad\qquad$(C) $\qquad DT(y) \lneq DT(x)$ if $\;0 < x < y < p$. 
\end{itemize} 
Assume also $Tp < p$.\\ 
If $T0 = 0$, let $\lambda =\rho(DT(0))$, the spectral radius of $DT(0)$. Then,  
\begin{itemize} 
\item[]$\lambda \leqslant 1\implies \forall x\in[0,p]$, $T^n x\to 0$ when $n\to\infty$. 
\item[]$\lambda > 1\implies$ There exists a unique nonzero fixed point $q=Tq$. Moreover, 
$q\in (0,p)$ \\
\phantom{$\lambda > 1\implies$ }and for every $x\in[0,p]\setminus\{0\}$, $T^n x\to q$ as $n\to\infty$. 
\end{itemize} 
If $T0\gneq 0$, then $T$ has a unique fixed point $q\in[0,p]$. Moreover, $q\in (0,p)$ and $T^n x\to q$ as $n\to\infty$ for every $x\in[0,p]$. 
\end{theo}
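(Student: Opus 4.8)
The plan is to read the statement as a theorem about monotone, concave operators and to derive everything from two consequences of (M) and (C). First I would record \emph{monotonicity}: integrating $DT$ along the segment joining two comparable points and using (M) gives that $T$ preserves the partial order on $[0,p]$, i.e. $0\leqslant x\leqslant y\leqslant p\Rightarrow Tx\leqslant Ty$, strictly in the interior. Second, \emph{concavity}: for fixed $x\in(0,p)$ the scalar map $\mu\mapsto T(\mu x)$ has derivative $DT(\mu x)\,x$, which is nonincreasing in $\mu$ by (C) because $x>0$; hence it is concave, lies above its chord and below its tangent at $0$. The chord inequality yields the subhomogeneity bound $T(\mu x)\geqslant \mu\,Tx+(1-\mu)\,T0$ for $\mu\in(0,1)$, strict in the interior by the strict part of (C); the tangent inequality, when $T0=0$, yields the linear upper bound $Tx\leqslant DT(0)\,x$. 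Together with the hypothesis $Tp<p$, these are the only tools I would use.

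For the case $T0=0$, $\lambda=\rho(DT(0))>1$, let $v\gneq 0$ be a Perron eigenvector, $DT(0)v=\lambda v$. Expanding $T(sv)=s\lambda v+o(s)$ shows that for small $s>0$ one has $T(sv)\geqslant sv$ coordinatewise, so $sv$ is a subsolution, while $Tp<p$ makes $p$ a supersolution. Monotonicity then forces $T^n(sv)$ to increase and $T^np$ to decrease, each to a fixed point; both are positive and, by the uniqueness argument below, coincide with a single $q\in(0,p)$. For an arbitrary $x\gneq 0$, strong monotonicity ensures $T^kx>0$ for some $k$, hence $T^kx$ dominates some small $sv$; squeezing $sv\leqslant T^kx$ and $x\leqslant p$ and iterating gives $T^nx\to q$ from both sides. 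The uniqueness of the positive fixed point, which I use repeatedly, I would obtain by the maximal-ratio device: for positive fixed points $q_1,q_2$ put $\mu^\ast=\sup\{\mu>0:\mu q_1\leqslant q_2\}$; if $\mu^\ast<1$ then $q_2=Tq_2\geqslant T(\mu^\ast q_1)\geqslant \mu^\ast q_1+(1-\mu^\ast)T0$, and strict subhomogeneity (or $T0\gneq 0$ in the last case) makes this strict, contradicting maximality; hence $\mu^\ast\geqslant 1$ and, symmetrically, $q_1=q_2$.

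When $T0\gneq 0$ the origin is no longer a fixed point, but now $0\leqslant T0$ furnishes a subsolution and $p$ a supersolution directly, with no spectral condition needed. Monotonicity makes $T^n0$ increase and $T^np$ decrease to positive fixed points, the uniqueness argument collapses them to one $q\in(0,p)$, and the same squeeze shows $T^nx\to q$ for every $x\in[0,p]$.

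The genuine obstacle is the critical case $T0=0$, $\lambda=1$, where the linear bound $Tx\leqslant DT(0)\,x$ gives only $T^nx\leqslant DT(0)^n x$, and $DT(0)^n$ need not contract. Here I would pass to a left Perron eigenvector $w\gneq 0$, $w^\top DT(0)=w^\top$, and rule out nonzero fixed points: if $Tq=q$ with $q\gneq 0$, then $q\leqslant DT(0)q$ forces $w^\top q\leqslant w^\top DT(0)q=w^\top q$, hence equality $DT(0)q=q$ and $Tq=DT(0)q$, contradicting the strict concavity which requires $Tq\lneq DT(0)q$ for $q\gneq 0$. With no nonzero fixed point, the decreasing sequence $T^np$ converges to a fixed point that can only be $0$, and $0\leqslant x\leqslant p$ gives $T^nx\leqslant T^np\to 0$ for all $x$. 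Making the strict concavity inequality hold right up to the boundary of the cone, so that the excluded fixed point cannot hide on a face where (C) degenerates, is the delicate point, and it is exactly where the strict inequality in (C), rather than a mere $\leqslant$, is indispensable; the clean way to enforce it is to work in the part (Thompson) metric on the interior, in which strict concavity makes $T$ a strict contraction.
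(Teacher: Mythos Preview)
The paper does not prove Theorem~\ref{thm-smith}; it is quoted as a known result due to Smith~\cite{smith}, and the paper only \emph{applies} it (in Theorem~\ref{thm-main}). So there is no proof in the paper against which to compare.

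That said, your outline is the standard route to such monotone--concave fixed point results and is essentially correct. One observation streamlines several of the boundary issues you flag: conditions (M) and (C) together force $DT(0)>0$ entrywise. Indeed, fix any $y\in(0,p)$; for every $0<x<y$, (C) gives $DT(x)\gneq DT(y)$ while (M) gives $DT(y)>0$, and letting $x\to 0$ yields $DT(0)\geqslant DT(y)>0$. Hence $DT(0)$ is a strictly positive matrix, its Perron eigenvector $v$ satisfies $v>0$, and your sub/supersolution squeeze in the case $\lambda>1$ proceeds without worrying about faces of the cone. The same positivity makes $T$ strongly monotone on $(0,p)$, i.e.\ $x\lneq y\Rightarrow Tx<Ty$, which is exactly what drives the maximal-ratio uniqueness argument cleanly: if $\mu^\ast<1$ and $\mu^\ast q_1\lneq q_2$, then $q_2=Tq_2>T(\mu^\ast q_1)\geqslant\mu^\ast Tq_1=\mu^\ast q_1$ strictly in every coordinate, contradicting maximality of $\mu^\ast$. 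Your handling of the critical case $\lambda=1$ via a left Perron vector and the strict tangent inequality $Tq\lneq DT(0)q$ is also the right mechanism; once $q>0$, the strictness follows by writing $Tq=\int_0^1 DT(tq)\,q\,dt$ and using that $DT(tq)\lneq DT(t'q)$ for $0<t'<t<1$, so the integrand is $\lneq DT(0)q$ on a set of positive measure.
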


One may remark now that in the case when $T$ has second-order derivatives, the concavity condition $(C)$
admits a simple sufficient condition.  Let the map $T_i:[0,p]\to\R_+$ denote the $i$th coordinate function
of $\, T:[0,p]\to[0,p]$.
\begin{prop}\label{prop-D2C}
Suppose that for all $\;i,j,k\in\NN{n}$, and for all $0< x <p$,
\[
\frac{\partial^2 T_i}{\partial x_k\partial x_j}(x) \leqslant 0,
\]
and for all $i,j$ there exists a $k$ such that the inequality is strict.\\ 
Then $T$ satisfies condition $(C)$ of theorem~\ref{thm-smith}.
\end{prop}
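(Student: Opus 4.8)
My plan is to establish the \emph{entrywise} strict inequality $DT(y) < DT(x)$ for every pair $0 < x < y < p$, which is stronger than the required $DT(y)\lneq DT(x)$ and hence immediately yields condition $(C)$. The natural tool is to restrict each scalar entry of the Jacobian to the segment joining $x$ to $y$ and show it is strictly decreasing there. Since the open box $(0,p)$ is convex, the segment $\xi(t)=x+t(y-x)$, $t\in[0,1]$, lies entirely in $(0,p)$, so all the hypotheses on the second derivatives are available along it, and the only regularity needed is that $T$ is twice continuously differentiable on $(0,p)$.

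First I would fix indices $i,j\in\NN{n}$ and introduce the auxiliary one–variable function $g(t)=\dfrac{\partial T_i}{\partial x_j}(\xi(t))$, which is $C^1$ on $[0,1]$. Differentiating through the segment by the chain rule gives
\[
g'(t)=\sum_{k=1}^n \frac{\partial^2 T_i}{\partial x_k\,\partial x_j}\bigl(\xi(t)\bigr)\,(y_k-x_k).
\]
By hypothesis each second-order derivative is nonpositive, while $y_k-x_k>0$ for every $k$ (because $x<y$ is a strict componentwise inequality in the paper's order), so every summand is $\leqslant 0$ and thus $g'(t)\leqslant 0$. Moreover, for the chosen $i,j$ and at each point $\xi(t)$ there is, by assumption, at least one index $k$ for which $\dfrac{\partial^2 T_i}{\partial x_k\,\partial x_j}(\xi(t))<0$ strictly; the corresponding summand is then strictly negative, and since all the others are $\leqslant 0$ I obtain $g'(t)<0$ for every $t\in[0,1]$.

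Finally I would integrate: $g(1)-g(0)=\int_0^1 g'(t)\,dt<0$, that is $\dfrac{\partial T_i}{\partial x_j}(y)<\dfrac{\partial T_i}{\partial x_j}(x)$. As $i$ and $j$ were arbitrary, this gives $DT(y)<DT(x)$ entry by entry, hence $DT(y)\lneq DT(x)$, which is exactly condition $(C)$ of Theorem~\ref{thm-smith}.

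I do not expect a serious obstacle here; the argument is essentially a one-dimensional monotonicity computation along a line segment. The only points that deserve explicit care are verifying that the segment $\xi(t)$ remains in the open domain $(0,p)$ where the hypotheses hold (which follows from convexity of the box), and noting that the index $k$ witnessing the strict negativity of the second derivative may vary with the point $\xi(t)$. This last subtlety is harmless, since I only need the pointwise inequality $g'(t)<0$ before integrating, not a single uniform choice of $k$.
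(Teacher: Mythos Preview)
Your argument is correct and follows essentially the same approach as the paper: the paper observes that under the hypothesis each entry $\frac{\partial T_i}{\partial x_j}$ of $DT$ is a (nonstrictly) decreasing function of every coordinate $x_k$ and strictly decreasing in at least one of them, from which $(C)$ follows for ordered pairs $x<y$. Your version is simply a more explicit rendering of this, carrying out the computation along the segment $\xi(t)$ and integrating $g'(t)<0$; the care you take about the index $k$ possibly varying with the point is a nice touch but, as you note, harmless.
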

Actually, it is clear that under this condition each term $\frac{\partial T_i}{\partial x_j}$ of $DT$ is a
decreasing function of each coordinate $x_k$. It is moreover strictly decreasing in at least one of these
coordinates, and $(C)$ thus follows. Observe by the way that the notion of concavity (w.r.t a partial order) we
deal with here is weakened by the fact that it concerns only ordered pairs $(x,y)$ of variables.\\

\subsection{Preliminary results for piecewise-affine models}
Let ${\cal C}=\{a^0,a^1\cdots a^{\ell-1}\}$ denote a sequence of regular domains which is periodically visited by the flow. Thus, we will consider $a^\ell=a^0$, and more generally the upperscript $i$ in $a^i$ shall be understood modulo $\ell$, unless explicitly mentioned. Also, we will denote walls in $\cal C$ as follows: $W^i=\cl{a^i}\cap\cl{a^{i+1}}$. Then for $i\in\{0\cdots\ell\}$ we use the notation $\phi^i=\phi(a^i)$, and $\theta^{(i)}_{s_i}$ is the $i$th threshold, i.e. $W^i\subset\{x\in\R^n\,|\, x_{s_i}=\theta^{(i)}_{s_i}\}$\footnote{Remark that walls have be defined as closed sets. The transition maps are originally only defined on the interior of these regions. However, it is not difficult to show that they can be extended at the boundary of these set in the present context\index{}.}.\\
We define local and global transition maps as follows:
\begin{equation}\label{eq-T}
\xymatrix@+.7pc{
W^0 \ar@<-.5pc>_-{\TT=T^{(\ell)}\circ T^{(\ell-1)}\cdots\circ T^{(1)}}[rrr] \ar^-{T^{(1)}}[r] & W^{(1)} \ar^-{T^{(2)}}[r] & \;\cdots\; \ar[r]^-{T^{(\ell)}} & W^{\ell} =W^0 \\
}
\end{equation}
%walls will often be identified with their projection on the threshold hyperplane containing them, i.e. $\TT$ and all $T^{(i)}$ apply to variables with $n-1$ coordinates in some occasions.\\

We denote $a^{i+1}-a^i = \varepsilon_i \ee_{s_i}$, i.e. $s_i$ is the exiting direction of box $a^i$, and $\varepsilon_i\in\{-1,+1\}$ indicates whether trajectories leave this box increasingly or decreasingly in direction $\ee_{s_i}$.\\

Now, let us give a more detailed expression of a local transition map $T^{(i)}$, obtained after straightforward calculations:
\begin{equation}\label{eq-loctransmap}
{T}^{(i)}(x) = 
\Big(\, \phi^i_j + (x_j - \phi^i_j)\alpha^{(i)}_j(x) \,\Big)_{\substack{j=1\dots n\\ j\ne s_i}}
\end{equation}
where $\displaystyle \alpha^{(i)}_j(x) =\alpha^{(i)}_j(x_{s_i}) = \left(\frac{\phi^{i}_{s_i} - \theta^{(i)}_{s_i}}{\phi^{i}_{s_i} - x_{s_i}}\right)
^{\frac{\gamma_j}{\gamma_{s_i}}}$. The $j$th coordinate map of ${T}^{(i)}$ is denoted ${T}^{(i)}_j$.\\

Furthermore, we make the
\begin{assum}\label{assum} For each $i\in\Nm{\ell}$, for all $j\in\NN{n}\setminus\{s_{i+1}\}$, $\phi_j^{i+1}=\phi_j^i$. Pairs of successive focal points satisfying this condition will be said to be \emph{aligned}.
\end{assum}

One may observe that there is a first dichotomy in theorem~\ref{thm-smith} : depending on whether $0$ is a fixed point or not, one has to check a condition on the spectral radius of $DT(0)$ or not. Here, $0$ will be some corner of the wall we chose as a Poincar\'e section, hence a point with all its coordinates equal to some threshold.  As might be seen by some hand drawing, it appears that the case $T0\gneq 0$ happens when several parallel threshold hyperplanes are crossed in some direction, while $0$ is a fixed point if there is a single threshold in every direction (which is translated to $0$). Let us state this fact more precisely.

\begin{prop}\label{prop-parallel}
Suppose that there are two distinct crossed thresholds in at least one direction. Assuming without loss of generality that one of these defines the wall $W^0$, i.e. $s_0$ is a direction with two crossed thresholds, one may write this condition as follows
$$
\exists i\in\NN{n},\,\exists k\in\NN{\ell},\qquad i=s_0=s_k\quad\text{and}\quad\theta^{(0)}_{s_0}\ne \theta^{(k)}_{s_k}.
$$
Then, $T^{(k)}\circ T^{(k-1)}\cdots\circ T^{(1)} \left(W^0\right) \subset \intr{W^k}$, and consequently
$\displaystyle
\TT \left(W^0\right)\subset \intr{W^0}.
$
%%in particular, if $W^0$ is mapped onto some $[0,p]\subset\R_+^{n-1}$, it follows that $\TT(0)>0$
\end{prop}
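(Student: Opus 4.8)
The plan is to prove the inclusion $T^{(k)}\circ\cdots\circ T^{(1)}(W^0)\subset\intr{W^k}$ coordinatewise and then read off the consequence. The second assertion is in fact the easy half: once $F:=T^{(k)}\circ\cdots\circ T^{(1)}$ maps $W^0$ into $\intr{W^k}$, I would apply the remaining maps $T^{(k+1)},\dots,T^{(\ell)}$ and use that each local transition is the restriction of a flow, hence a diffeomorphism sending the relative interior of one wall into that of the next (a point strictly inside a wall cannot leave its box through a codimension-$2$ corner, by the transparent-wall hypothesis). Relative interiority is thus preserved all the way back to $W^0$, giving $\TT(W^0)\subset\intr{W^0}$. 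It therefore suffices to show that for every $x\in W^0$ and every $j\ne i$ the coordinate $F_j(x)$ lies strictly between the bounds $\theta^-_j(a^k)$ and $\theta^+_j(a^k)$ of $a^k$ (note $W^k$ is degenerate only in the exit direction $i=s_k$).

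The heart of the matter is the final transition $T^{(k)}$, which leaves $a^k$ in direction $s_k=i$ at the threshold $\theta^{(k)}_{s_k}$. Writing $y=T^{(k-1)}\circ\cdots\circ T^{(1)}(x)\in W^{k-1}$, formula~(\ref{eq-loctransmap}) gives
\[
F_j(x)=\phi^k_j+(y_j-\phi^k_j)\,\alpha^{(k)}_j,\qquad
\alpha^{(k)}_j=\Big(\tfrac{\phi^k_{s_k}-\theta^{(k)}_{s_k}}{\phi^k_{s_k}-y_{s_k}}\Big)^{\gamma_j/\gamma_{s_k}}\in(0,1),
\]
the factor being \emph{strictly} below $1$ because the transit time in $a^k$ is positive (the entry wall $W^{k-1}$ differs from the exit wall $W^k$). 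Hence $F_j(x)$ is a strict convex-type combination of $y_j\in[\theta^-_j(a^k),\theta^+_j(a^k)]$ and $\phi^k_j$; provided $\phi^k_j$ itself lies strictly inside $(\theta^-_j(a^k),\theta^+_j(a^k))$, the strict inequality $\alpha^{(k)}_j<1$ pushes the image off the faces $x_j=\theta^\pm_j(a^k)$. The transparent mechanism is the case $k=1$: since $s_0=s_1=i$, the coordinate $y_{s_1}=\theta^{(0)}_{s_0}$ is constant on $W^0$, so $F$ restricts to the affine contraction $x_j\mapsto\phi^1_j+(x_j-\phi^1_j)\beta_j$ with slope $\beta_j=\big((\phi^1_i-\theta^{(1)}_{s_1})/(\phi^1_i-\theta^{(0)}_{s_0})\big)^{\gamma_j/\gamma_i}$. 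It is precisely here that the hypothesis $\theta^{(0)}_{s_0}\ne\theta^{(k)}_{s_k}$ is decisive: it is exactly equivalent to $\beta_j<1$ (coincident thresholds would give slope $1$, making $T^{(1)}$ the identity on $W^0$ and sending corners to corners), and $\beta_j<1$ contracts $\cl{W^0}$ strictly into $\intr{W^1}$.

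To pass from single points to the whole wall I would use monotonicity. Under Assumption~\ref{assum}, and after the reorientation of coordinates supplied by the monotone-systems framework, each $T^{(m)}$, hence $F$, is order-preserving for the partial order, while $W^0$ and $W^k$ are boxes $[w^-,w^+]$ and $[d^-,d^+]$. For monotone $F$ one has $F(w^-)\leqslant F(x)\leqslant F(w^+)$ for all $x\in\cl{W^0}$, so it is enough to check the two extreme corners: if $d^-<F(w^-)$ and $F(w^+)<d^+$, the coordinatewise estimate above forces $F(\cl{W^0})\subset\intr{W^k}$.

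The step I expect to be the main obstacle is establishing the focal-point interiority $\phi^k_j\in(\theta^-_j(a^k),\theta^+_j(a^k))$ for all $j\ne i$ — equivalently $I_{out}(a^k)=\{i\}$ — and, in the same vein, ruling out a boundary-to-boundary (codimension-$2$) transit of a corner through $a^k$. This is exactly where distinctness of the two thresholds cannot be dispensed with: in the single-threshold situation a corner orbit genuinely closes up along the edges of the boxes (this is the case $\TT0=0$), and what must be shown is that forcing direction $i$ between the two \emph{different} values $\theta^{(0)}_{s_0}$ and $\theta^{(k)}_{s_k}$ is incompatible with such a corner orbit. I would extract this from Assumption~\ref{assum}, which carries the transverse focal coordinates unchanged except in the current exit direction, so that the coordinates of $\phi^k$ relevant to $a^k$ are inherited from earlier boxes and remain strictly between their thresholds; the genuine displacement in direction $i$ then keeps every transverse coordinate strictly off its faces. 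Controlling the intermediate walls $W^1,\dots,W^{k-1}$, which are not perpendicular to $i$, is the bookkeeping price of the general case, but it leaves intact the mechanism displayed for $k=1$.
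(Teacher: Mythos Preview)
Your treatment of the case $k=1$ is correct and matches the paper's. The gap is in the passage to $k>1$, precisely at the step you call ``bookkeeping''. Your claim that $\alpha^{(k)}_j<1$ strictly, justified by ``the transit time in $a^k$ is positive (the entry wall $W^{k-1}$ differs from the exit wall $W^k$)'', fails for points $y\in W^{k-1}\cap W^k$ (those with $y_{s_k}=\theta^{(k)}_{s_k}$): there the transit time is zero, $\alpha^{(k)}_j=1$, and $T^{(k)}$ is the identity. This edge is nonempty whenever $s_{k-1}\ne s_k$, the generic situation for $k>1$. So the last transition alone does not push the image off $\partial W^k$; the substantive content is to show that $T^{(k-1)}\circ\cdots\circ T^{(1)}$ sends no point of $W^0$ onto $W^{k-1}\cap W^k$. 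Your monotonicity reduction to the two corners does not close this: monotonicity of the local maps is available only on the zones $\zo^i(\sigma^i)$ (and only after a $\sigma$-dependent coordinate reflection, established later in Proposition~\ref{prop-sigmai}), not on the full walls $W^i$, and even granting it you would still have to locate the images of the corners --- which is exactly the boundary-tracking problem you have deferred.

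You also misidentify the obstacle. The focal interiority $\phi^k_j\in(\theta^-_j(a^k),\theta^+_j(a^k))$ for $j\ne s_k$, i.e.\ $I_{out}(a^k)=\{s_k\}$, is a \emph{standing hypothesis} on $\cal C$ (each box has a unique escaping direction), not something to derive from Assumption~\ref{assum}; the paper's proof uses only this single-exit hypothesis and not the alignment condition at all (cf.\ the Remark after the proposition). The paper's argument is to prove by induction the exact identity
\[
T^{(m)}\circ\cdots\circ T^{(1)}(W^0)\cap\partial W^m \;=\; \bigcap_{j=0}^m W^j,
\]
using that $T^{(m+1)}(W^m)\cap\partial W^{m+1}=W^m\cap W^{m+1}$ and that $T^{(m+1)}$ restricts to the identity on this set. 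The hypothesis $s_0=s_k$ with $\theta^{(0)}_{s_0}\ne\theta^{(k)}_{s_k}$ then makes $\bigcap_{j=0}^k W^j=\varnothing$, since no point can satisfy $x_i=\theta^{(0)}_{s_0}$ and $x_i=\theta^{(k)}_{s_k}$ simultaneously. That is the missing idea in your proposal: rather than a strict contraction at the final step, one tracks precisely which boundary points survive each transition and shows they eventually run out.
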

To ease the reading, we have postponed the proof of this proposition to Annex \ref{anx-proofs}.\\

\begin{rem}The previous proposition holds for any disposition of the focal points, but requires the assumption that each box in the sequence admits a unique escaping direction.
\end{rem}
%$$\star\star\star$$
Another important result that we need is the existence of a region on which $\TT$ is monotone. If so, concavity will follow. Under the alignment condition on focal points, there will be a unique region on which this holds after a finite number of iterates. We shall prove this fact.\\
First of all, let us compute the first order derivatives of a transition map $T^{(i)}$ at a point $x$:
\begin{equation}\label{eq-dTidxj}
\frac{\partial\T_k^{(i)}}{\partial x_j} (x) =\left\{
\begin{array}{ll}
\displaystyle \alpha_k^{(i)}(x) & \text{if }\;  k=j\\[2mm]
\displaystyle-\frac{\gamma_k}{\gamma_{s_i}}\frac{\phi_k^i - x_k}{\phi_{s_i}^i - x_{s_i}}
\alpha_k^{(i)}(x) & \text{if }\;  j=s_i \\[2mm]
0 & \text{otherwise}.
\end{array}\right.
\end{equation}
Where $ j\in\NN{n}\setminus\{s_{i-1}\}$ and $k\in\NN{n}\setminus\{s_i\}$.\\
One deduces that diagonal terms of the Jacobian are positive, on the column $s_i$ one has
$$
\sgn\left(\frac{\partial\T_k^{(i)}}{\partial x_{s_i}}(x) \right) =- \sgn\left(\phi_k^i - x_k\right) \sgn\left(\phi_{s_i}^i - x_{s_i}\right) ,\quad k\in\NN{n}\setminus\{s_i\} ,
$$
and the Jacobian is zero elsewhere.\\
This indicates a possible usefulness of partitioning each wall $W^i$ into {\em zones} of the form 
$$
\zo^i(\sigma)=\left\{x\in W^i\,|\, \sgn(\phi_j^i-x_j)=\sigma_j\,\forall j \right\}\quad\text{where}\; \sigma\in\{-1,+1\}^{\NN{n}\setminus\{s_i\}}.
$$
And actually, we will show now that for a periodic sequence $\mathcal{C}=\{a^0\cdots a^{\ell-1}\}$ with the two properties mentioned at the beginning (aligned focal points and one exit direction for each $a^i$), and the fact that $\cal C$ is 'full dimensional', there is a single zone of interest on each wall.
\begin{prop}\label{prop-sigmai}
Suppose that along $\cal C$ all variables switch at least once : $\{s_i\,|\, i\in\NN{\ell}\}=\NN{n}$.
Then under the hypotheses above, for each $i\in\{0\cdots \ell-1\}$, there exists a unique $\sigma^i\in\{-1,+1\}^{\NN{n}\setminus\{s_i\}}$ such that
\begin{itemize}
\item $\forall x\in W^i, \; T^{(i)}\circ T^{(i-1)}\circ\cdots T^{(1)}\circ T^{(\ell)} \cdots \circ T^{(i+1)} (x) \in\zo^i(\sigma^i)$ : all orbits eventually enter this zone.
\item $T^{(i)}\left(\zo^i(\sigma^i)\right) \subset \zo^{i+1}(\sigma^{i+1})$ : no orbit escapes these zones.
\end{itemize}
\end{prop}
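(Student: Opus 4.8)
The plan is to follow the discrete \emph{sign vector} $\big(\sgn(\phi^i_j-x_j)\big)_{j\ne s_i}$ that labels the zones $\zo^i(\sigma)$, and to show that one full turn around $\mathcal C$ erases all dependence on its initial value. The engine is an elementary consequence of the explicit form~(\ref{eq-loctransmap}): for any $x$ on the wall feeding box $a^{i+1}$ and any $j\ne s_{i+1}$,
\[
\phi^{i+1}_j-T^{(i+1)}_j(x)=(\phi^{i+1}_j-x_j)\,\alpha^{(i+1)}_j(x),
\]
and since $\alpha^{(i+1)}_j(x)>0$ the map $T^{(i+1)}$ preserves the sign of $\phi^{i+1}_j-(\cdot)_j$ coordinatewise. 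Using Assumption~\ref{assum}, which gives $\phi^{i+1}_j=\phi^i_j$ for every $j\ne s_{i+1}$, I can then read off the zone of the image purely from the zone of $x$ together with the escape data $(s_i,\varepsilon_i)$.

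First I would compute the induced recursion on sign vectors. Fix $x\in\zo^i(\sigma^i)$ and set $y=T^{(i+1)}(x)$. For $j\notin\{s_i,s_{i+1}\}$ the coordinate $x_j$ is free on $W^i$ and $\phi^{i+1}_j=\phi^i_j$, so $\sgn(\phi^{i+1}_j-y_j)=\sgn(\phi^i_j-x_j)=\sigma^i_j$, i.e. the sign is transported unchanged. The only direction that can change is the one just vacated, $j=s_i$ (when $s_i\ne s_{i+1}$): there $x_{s_i}=\theta^{(i)}_{s_i}$ on $W^i$ and $\phi^{i+1}_{s_i}=\phi^i_{s_i}$, while $\sgn(\phi^i_{s_i}-\theta^{(i)}_{s_i})=\varepsilon_i$ precisely because $s_i$ is the escaping direction of $a^i$ with orientation $\varepsilon_i$. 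Hence
\[
\sigma^{i+1}_j=\sigma^i_j\quad(j\ne s_i,s_{i+1}),\qquad \sigma^{i+1}_{s_i}=\varepsilon_i,
\]
so each transition leaves every sign untouched except that it forces the freshly released direction $s_i$ to the value $\varepsilon_i$.

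From this recursion existence and uniqueness follow at once. Composing around the whole loop defines a map $\sigma^0\mapsto\sigma^\ell$ on sign vectors (with $W^\ell=W^0$, hence $s_\ell=s_0$). For any coordinate $j\ne s_0$, the full-dimensionality hypothesis $\{s_i\,|\,i\in\NN{\ell}\}=\NN{n}$ guarantees a last index at which $j$ is an escaping direction before the loop closes; after that step the sign in direction $j$ is frozen to the corresponding $\varepsilon$ and is never reset again, hence is independent of the initial $\sigma^0$. Thus the loop map is constant, so it has a unique fixed point $\sigma^0$, and propagating the recursion forward yields the unique family $(\sigma^i)_i$. The second bullet is then just the recursion applied to this family, $T^{(i+1)}(\zo^i(\sigma^i))\subset\zo^{i+1}(\sigma^{i+1})$; and the first bullet is the same sign-tracking argument applied to an \emph{arbitrary} $x\in W^i$ (not assumed to lie in any particular zone), since after one turn every coordinate has been reset and the image necessarily lands in $\zo^i(\sigma^i)$.

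The main obstacle is the index bookkeeping rather than any analysis: one must treat carefully the case where a variable switches several times along $\mathcal C$ (so that a direction is an escaping direction more than once), arguing that only the \emph{last} reset before the loop closes determines the frozen sign, and check the wrap-around between indices $\ell$ and $0$. One should also record that in the first bullet the compositions are taken on the subdomain of $W^i$ where all the relevant iterates are defined (the orbit genuinely realises $\mathcal C$); the sign identity above holds verbatim there, since it uses only $\alpha^{(i+1)}_j>0$. It is exactly the hypothesis $\{s_i\}=\NN{n}$ that makes the loop map constant, and hence singles out one zone per wall.
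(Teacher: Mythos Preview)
Your proposal is correct and follows essentially the same route as the paper: both arguments track the sign vector $\Delta^i_j=\sgn(\phi^i_j-x^i_j)$ along the cycle, use the alignment assumption together with $\alpha^{(i+1)}_j>0$ to show these signs are transported unchanged except in the just-vacated direction $s_i$, where the sign is reset to a value independent of the initial point, and then invoke the full-dimensionality hypothesis to conclude that one complete turn determines all signs. Your phrasing of the loop action on sign vectors as a constant map (hence with a unique fixed point) is a slightly crisper packaging of the same induction the paper carries out, and your index bookkeeping (including the tacit correction $T^{(i+1)}\!:\zo^i(\sigma^i)\to\zo^{i+1}(\sigma^{i+1})$) is sound.
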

\begin{proof}
Let $i\in\NN{\ell}$. We are in fact going to define explicitly the sign vector $\sigma^i$. \\
First recall the alignment condition on focal points:
$$
\forall j\ne s_{i+1},\quad \phi_j^i=\phi_j^{i+1}.
$$
Recall also that $\varepsilon_i$ defines whether $W^i$ is crossed increasingly or decreasingly in its normal direction, $s_i$. Then, one can then see from the definitions that:
$$
\varepsilon_i = \sgn(\phi^i_{s_i}-  \theta^{(i)}_{s_i}) =  \sgn(\phi^i_{s_i}-  x_{s_i}) ,\qquad \forall x\in W^{i-1}.
$$
Now,  let $x^0\in W^0$ be an arbitrary point, and then define inductively $x^{i+1}=T^{(i)} x^i$, so that $\{x^i\}_i$ is a trajectory in $\mathcal{C}$.\\
Let us introduce yet another notation: $\Delta_j^i=\sgn(\phi^i_j-x^i_j)$. This quantity depends on $x^0$, at least at first sight. \\
From the alignment condition and the expression of $T^{(i+1)}$ one gets:
$$
 \phi^{i+1}_j -  x_j^{i+1}=  - \alpha^{(i+1)}_j( x^i_{s_{i+1}})\cdot (x^i_j-\phi^i_j),\qquad  j\ne s_{i+1}
$$
And since $\alpha^{(i+1)}_j$ is nonnegative, we have $\forall  j\ne s_{i+1},\;\Delta_j^{i+1}=\Delta_j^i$.\\
Furthermore, the expression of $\varepsilon_i$ given above shows that in direction $s_i$, the precise value of coordinate $x^i_{s_i}$ has no influence. In terms of our new notation: $\Delta_{s_{i}}^{i}=\varepsilon_{i}$ independently of $x^0$, since by construction $x^{i}_{s_{i}}=\theta_{s_{i}}^{(i)}$.\\
Then, one should remark that
\begin{eqnarray*}
\Delta^{i+1}_{s_i} & = & \sgn\left(\phi_{s_i}^{i+1} - (\phi_{s_i}^{i+1} +\alpha^{(i+1)}_{s_i}(x^i_{s_{i+1}})\cdot(x^i_{s_i}-\phi^{i+1}_{s_i}))\right)\\
 & = & \sgn\left(\phi^{i+1}_{s_i} - x^i_{s_i}\right) =  \sgn\left(\phi^{i+1}_{s_i} - \theta^{(i)}_{s_i}\right)
\end{eqnarray*}
since $x^i\in W^i\subset\{x\,|\, x_{s_i}= \theta^{(i)}_{s_i}\}$. Here the important point is that $\Delta^{i+1}_{s_i}$ does not depend on $x^0$, and an easy induction shows that this holds for all subsequent $\Delta^{m}_{s_i}$, when $m\geqslant i+1$.\\
So if all variables switch at least once, as we have supposed, $\Delta^\ell_j$ is independent of the initial condition, i.e. $\sgn(\TT (x^0)-\phi^\ell)$ is fixed. Starting from other walls than $W^0$ does not change the argument.\\

As for invariance, it is a consequence of invariance of the zones $\zo^i(\sigma)$ under the action of the flow in each box, which can be retrieved from the explicit expression of this flow.
\end{proof}

\subsection{Main result}
We can now recapitulate the results of the previous section, and use them to apply theorem \ref{thm-smith}. This will be formulated as a single theorem, but before that we make some remarks.\\
If the condition of proposition \ref{prop-sigmai} is verified (all variables switch), but not the condition of proposition \ref{prop-parallel}, then it is not difficult to see that $\bigcap_{j=0}^\ell W^j$ is a single point, which is furthermore a fixed point of $\TT$. In the statement and proof of the theorem below, this point is denoted $0$.\\
When the condition of proposition \ref{prop-parallel} holds, some directions involve several distinct thresholds, and $\bigcap_{j=0}^\ell W^j = \varnothing$. In this case, $0$ will denote the corner point of $W^0$ (i.e. the boundary point with all its coordinates being threshold values), which also belongs to $\zo^0(\sigma^0)$.\\
Finally, it is clear that the map $\TT$ is differentiable inside $W^0$ and that its differential $D\TT$ can always be extended by continuity to the point we conventionally denote $0$.\\
In words, the theorem below states that given a cycle of regular domains where successive pairs of focal points are aligned and all variables switch, there exists either a unique stable and attracting periodic orbit, or $0$ is the only attractor. The alternative depends on the stability of $0$, and if furthermore two parallel thresholds are crossed, $0$ is not fixed and there is a unique stable periodic orbit.
\begin{theo}\label{thm-main}
 Let ${\cal C}=\{a^0,a^1\cdots a^{\ell-1}\}$ denote a sequence of regular domains which is periodically visited by the flow, and such that each domain $a^i$ has a unique exiting direction $s_i$. Suppose that the focal points of $\cal C$ satisfy Assumption \ref{assum}, i.e. they are aligned. Suppose also that all variables are switching at least once.\\
Consider the first return map $\TT:W^0\to W^0$ defined in (\ref{eq-T}). Let $\lambda =\rho(D\TT(0))$, the spectral radius of $D\TT(0)$.
Then, the following alternative holds:
\begin{itemize} 
\item[i)] if $\lambda\leqslant 1$, then $\forall x\in W^0$, $\TT^n x\to 0$ when $n\to\infty$. 
\item[ii)] if $\lambda>1$ then there exists a unique nonzero fixed point $q=\TT q$. Moreover, $q\in  \intr{\zo^0(\sigma^0)}$ and for every $x\in W^0\setminus\{0\}$, $\TT^n x\to q$ as $n\to\infty$. 
\end{itemize} 

If moreover the condition of proposition \ref{prop-parallel} is satisfied, then the conclusion of $ii)$ holds.
\end{theo}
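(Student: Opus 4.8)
The plan is to prove the theorem by verifying that, after an affine change of coordinates, the first return map $\TT$ restricted to the distinguished zone $\zo^0(\sigma^0)$ of Proposition~\ref{prop-sigmai} fits exactly into the framework of Smith's Theorem~\ref{thm-smith}, and then to read off the alternative from its conclusion, splitting according to whether the condition of Proposition~\ref{prop-parallel} holds.

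First I would reduce the problem to the invariant zone. By Proposition~\ref{prop-sigmai} there is a unique sign vector $\sigma^0$ such that every orbit enters $\zo^0(\sigma^0)$ after finitely many returns and such that $\TT(\zo^0(\sigma^0))$ stays inside the corresponding zone. Working with the $n-1$ free coordinates $j\neq s_0$ of the wall $W^0$, I would send the corner point denoted $0$ (all of whose coordinates are thresholds, and which lies in $\zo^0(\sigma^0)$) to the origin by translation, and compose with the reflection determined by $\sigma^0$; this carries the closure of $\zo^0(\sigma^0)$ onto a box $[0,p]$ inside the nonnegative orthant. The purpose of the reflection is precisely to convert the sign pattern $\sigma^0$ into the standard partial order, so that the sign structure read off from (\ref{eq-dTidxj}) becomes uniformly nonnegative. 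Since the first bullet of Proposition~\ref{prop-sigmai} guarantees that every orbit of $W^0$ enters this box in finitely many steps, it suffices to establish the dynamics of $\TT$ on $[0,p]$; the convergence statements on all of $W^0$ then follow immediately.

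Next I would check the hypotheses of Theorem~\ref{thm-smith} in turn. That $\TT([0,p])\subset[0,p]$ is the invariance statement (second bullet) of Proposition~\ref{prop-sigmai} transported through the change of variables. For (M) I would use that $D\TT(x)=DT^{(\ell)}\cdots DT^{(1)}$ is the product, along the orbit, of the local Jacobians (\ref{eq-dTidxj}); in the adjusted coordinates each factor has nonnegative entries (the diagonal terms $\alpha^{(i)}_k>0$, and the entries in column $s_i$ carrying the constant sign $-\sgn(\phi^i_k-x_k)\sgn(\phi^i_{s_i}-x_{s_i})$ fixed by $\sigma^i$), and the hypothesis that every variable switches at least once is what upgrades the product from nonnegative to strictly positive, giving $D\TT(x)>0$ for $0<x<p$. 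For the concavity (C) I would adjust each intermediate wall $W^i$ by its own sign vector $\sigma^i$ and observe that, in these coordinates, each local map $T^{(i)}$ is order preserving with concave coordinate functions: each is affine in the $x_j$, $j\neq s_i$, while in $x_{s_i}$ the factor $\alpha^{(i)}_j$ is convex and is multiplied by $x_j-\phi^i_j$, whose sign is fixed to $-\sigma^i_j$ inside the zone, so that the product is concave. Proposition~\ref{prop-D2C} then applies to each factor, and the composition of order-preserving concave maps is again order-preserving and concave; the genuine curvature carried by the switching directions upgrades this to the strict form $D\TT(y)\lneq D\TT(x)$. Finally $\TT p<p$ follows from the strict inward contraction of the map on the boundary of the box. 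Much of this monotonicity and concavity bookkeeping is parallel to the single negative loop treated in \cite{negloop,actabio}.

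It then remains to settle the status of the corner $0$. If the condition of Proposition~\ref{prop-parallel} fails, then $\bigcap_{j=0}^\ell W^j$ is a single point fixed by $\TT$, so $\TT 0=0$ and the first part of Theorem~\ref{thm-smith} gives exactly the alternative controlled by $\lambda=\rho(D\TT(0))$: global convergence to $0$ when $\lambda\leqslant1$, and a unique attracting fixed point $q\in\intr{\zo^0(\sigma^0)}$ when $\lambda>1$, which are parts i) and ii). If instead the condition of Proposition~\ref{prop-parallel} holds, that proposition gives $\TT(W^0)\subset\intr{W^0}$, so the corner is pushed strictly inside and $\TT 0\gneq0$; the last part of Theorem~\ref{thm-smith} then yields a unique fixed point $q\in(0,p)$ attracting every point of $[0,p]$, which is the conclusion of ii) regardless of $\lambda$. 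I expect the real difficulty to be condition (M): each local Jacobian is sparse (diagonal plus a single column), so strict positivity of their $\ell$-fold product is not automatic and must be extracted from the combinatorics of the switching sequence — which is exactly where the assumption that all variables switch is used.
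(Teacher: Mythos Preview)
Your proposal is correct and follows essentially the same route as the paper: reduce to the invariant zone $\zo^0(\sigma^0)$ via Proposition~\ref{prop-sigmai}, perform the sign-fixing change of coordinates, verify $(M)$ from the product structure of the local Jacobians~(\ref{eq-dTidxj}) and the hypothesis that all variables switch, verify $(C)$ from the second-order structure of the local maps together with Proposition~\ref{prop-D2C} and the preservation of concavity under composition with monotone maps, and then split on the condition of Proposition~\ref{prop-parallel} to decide whether $\TT 0=0$ or $\TT 0\gneq 0$. The only cosmetic difference is that the paper argues $(C)$ by writing down the second derivatives of the local maps explicitly (equation~(\ref{eq-d2Tidxj})) and reading off their signs in each zone, whereas you argue it by noting that each coordinate is affine in the non-switching variables and the product of a convex factor with a sign-fixed factor in $x_{s_i}$; these are two ways of saying the same thing.
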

\begin{proof}
 To prove this statement, we verify that the hypotheses of theorem \ref{thm-smith} are satisfied. First, from proposition \ref{prop-sigmai}, any $x\in W^0$ enters $\zo^0(\sigma^0)$ under the action of $\TT$. Thus, we can consider the restriction of $\TT$ to this zone from now on. We now give the main arguments for a proof that this restriction is monotone and concave, in the sense of conditions $(M)$ and $(C)$.\\
From equation \ref{eq-dTidxj}, it follows that the Jacobian of each local transition map has fixed sign in the zone $\zo^i(\sigma^i)$, with nonzero terms only on the diagonal and on the column $s_i$. From the chain rule, it follows that the Jacobian of $\TT$, denoted $J\TT$, is the product of matrices of this form. Since $s_i$ takes all values in $\NN{n}$ by assumption, $J\TT$ has no zero terms, and from proposition \ref{prop-sigmai} one can easily deduce that these terms have a fixed sign on $\zo^0(\sigma^0)$. Then, by a simple change of coordinate system, it is always possible to ensure that these terms are in fact positive (see \cite{negloop,actabio} for an explicit example of this change of coordinates).\\
Now, the condition $(C)$ can be proved to hold thanks to proposition \ref{prop-D2C} and the explicit form of second order derivatives of local transition maps:
\begin{equation}\label{eq-d2Tidxj}
\frac{\partial^2\T_k^{(i)}}{\partial x_m\partial x_j} (x) =\left\{
\begin{array}{ll}
\displaystyle\frac{\gamma_k}{\gamma_{s_i}} \frac{ \alpha_k^{(i)}(x)} {\phi_{s_i}-x_{s_i}}
& \text{if }\;  k=j,\,m=s_i\quad\text{or } j=s_i,\,m=k\\[3mm]
\displaystyle -\frac{\gamma_k}{\gamma_{s_i}} \left(1+\frac{\gamma_k}{\gamma_{s_i}}\right) \frac{\phi_k^i -
x_k}{(\phi_{s_i}^i - x_{s_i})^2}  \alpha_k^{(i)}(x)  & \text{if }\; 
m=j=s_i \\[3mm]
0 & \text{otherwise}.
\end{array}\right.
\end{equation}
where it appears that these quantities are of fixed sign in the zones $\zo^i(\sigma^i)$. Then we can apply proposition \ref{prop-D2C}, up to the same coordinate change as for the proof of $(M)$. Because transition maps are monotone (up to coordinate change) it is possible to prove that the full return map $\TT$ is also concave, just as the local maps. Here again, we refer to \cite{negloop,actabio} for a detailed justification.\\

The statements $i)$ and $ii)$ are now obtained by strictly applying theorem \ref{thm-smith}, the only missing hypothesis being $\TT p<p$. The latter can be proved using similar arguments as those in the beginning of the proof of proposition \ref{prop-parallel}. Here again, we refer to \cite{negloop,actabio} for a detailed proof.\\
Finally, the last part of the theorem is a direct consequence of proposition \ref{prop-parallel}. Actually in this case, whatever corner point has been chosen as origin, the proposition shows that it is mapped in the interior of $\zo^0(\sigma^0)$, which is identical to the condition $T0\gneq 0$ of theorem \ref{thm-smith}, whence the identical conclusion.
\end{proof}

 This new result bears some resemblance with the previous work on so-called {\it cyclic attractors} \cite{glasspastern1,glasspastern2}. The important improvement is that we do not make any assumption on decay rates, which were uniform in this previous result. On the other hand, we have to suppose the alignment condition on focal points for our result to apply, whereas the cited references do not make any assumption on the precise position of focal points.

%%%%%%%%%%%%%%%%%%%%%%%%%%%%%%%%%%%%%%%%%%%%%%%%%%%%%%%%%%%%%%%%%%%%%%%%%%%%%%%
\section{Examples}
In this section we study three examples. They are not directly inspired by real biological systems, but serve the purpose of illustrating potential applications of theorem \ref{thm-main}.

\subsection{Two intricate negative loops}\label{sec-2loops}
Our previous result \cite{negloop,actabio} can be seen as a particular case of theorem \ref{thm-main}: for a system consisting of a single negative feedback loop involving all variables, there exists a cycle $\cal C$ satisfying the conditions of this theorem. Moreover in this case, we showed that $i)$ holds with two variables (by showing $\lambda=1$), and $ii)$ holds with three variables or more (by showing $\lambda>1$).\\

In this section we will consider systems with three variables consisting of two negative loops intricate in the following manner:
$$
\SelectTips{eu}{11}\xymatrix @ -.5pc {*+[o][F-] {1}  \ar@<1mm>@*{[|<.5pt>]}@/^.8pc/@{-|}[rr]   && *+[o][F-] {2} \ar@<1mm>@*{[|<.5pt>]}@/^.8pc/[dl] \\
 & *+[o][F-] {3}\ar@<1mm>@*{[|<.5pt>]}@/^.8pc/[ul] \ar@<1mm>@*{[|<.5pt>]}@/^.8pc/@{-|}[ur]&}
$$
First, the graph above corresponds in fact to various different systems. In particular, since gene $3$ acts on both other genes, this may happen in general at two distinct threshold values, whose order needs to be specified. Moreover, the production rate of gene $3$ is a function of $x_2$ of the form $K_3\: {\sf s}^+(x_2)$, and the value $K_3/\gamma_3$ relative to these two thresholds can be chosen in two qualitatively distinct ways: between the two thresholds, or higher than the greatest one (excluding the case where it is lower than the $\min$ of the two thresholds). Also, since gene $2$ is regulated by the two other genes, we must also chose whether this happens as a sum or a product of step functions (a rapid inspection shows that in purely boolean terms, these are the only two cases in accordance with the graph above). In summary, there are thus eight cases to distinguish (two choices of thresholds orders, two choices of production rate $K_3$ and two choices of input function on $x_2$).\\
In all cases, the set of regular domains is represented by ${\cal A}=\{0,1\}^2\times\{0,1,2\}$. Any of the four choices fixes the position of the 12 focal points, and thus a discrete transition structure which may contain a cyclic sequence of the type studied in this paper. According to our inspection, in two cases this leads to a structure with no cycle, in one case to a structure with two cycles involving 2 switching variables only, in four cases to a structure with one or more cycles presenting escaping edges (or walls) and finally in a single case to a structure with a cycle involving all three variables, and no escaping edge. In the following we analyse this last case only: it corresponds to $\theta_3^1$ (resp. $\theta_3^2$) being the threshold of the activation $3\to 1$ (resp. the inhibition $3\dashv 2$), and the regulation of $x_2$ being a product of step functions, with constraints on $K_3$ as below: 
$$\left\{
\begin{array}{l}
\dot x_1(t)  =  K_1\:{\sf s}^+(x_3,\theta_3^1) -\gamma_1\: x_1\\
\dot x_2(t)  =  K_2\: {\sf s}^-(x_1)\:{\sf s}^-(x_3,\theta_3^2) -\gamma_2\: x_2\\
\dot x_3(t)  =  K_3\: {\sf s}^+(x_2)-\gamma_3\: x_3\\[3mm]
\text{with constraints  }\quad K_i>\theta_i\gamma_i,\;\; i=1,2,\quad \theta_3^2\:\gamma_3\;> K_3\;>\theta_3^1\:\gamma_3.
\end{array}
\right.$$
%Another choice of constraint for the last production rate could be $K_3> \theta_3^2\:\gamma_3$. For this choice, we find a cycle with an escaping wall, which is outside the scope of our present results, and two additional steady states.\\

Up to a division by the degradation rates $\gamma_i$ the focal points are given be the following table:
$$\begin{array}{lc|c|c|c|c|c|c|c|c|c|c|c}
{\cal C}:&000 & 001 & 002& 010& 011& 012& 100& 101& 102& 110& 111& 112 \\
\hline 
&0    &K_1&K_1&0    &K_1&K_1&0&K_1&K_1&0     &K_1&K_1\\
&K_2&K_2&0    &K_2&K_2&0    &0&0      &0    &0     &0    &0    \\
&0    &0    &0    &K_3&K_3&K_2&0&0      &0    &K_3&K_3&K_3
\end{array}$$
Then, the transition structure mentioned above can be depicted as follows:
$$
\xymatrix @ -.5cm {&110  \ar@*{[|<.5pt>]}[rr]\ar@*{[|<.5pt>]}'[d][dd]\ar@*{[|<.5pt>]}[dl] && 111\ar@*{[|<1.8pt>]}[dl]  && 112\ar@*{[|<.5pt>]}[dl] \ar@*{[|<.5pt>]}[ll]  \\
100  \ar@*{[|<1.8pt>]}[dd] && 101\ar@*{[|<1.8pt>]}[ll]  && 102\ar@*{[|<.5pt>]}[ll]  &\\
&010  \ar@*{[|<1.8pt>]}'[r][rr] && 011\ar@*{[|<1.8pt>]}'[u][uu]  && 012 \ar@*{[|<.5pt>]}'[l][ll] \ar@*{[|<.5pt>]}[uu]  \ar@*{[|<.5pt>]}[dl] \\
000  \ar@*{[|<1.8pt>]}[ur] && 001 \ar@*{[|<.5pt>]}[ll]\ar@*{[|<.5pt>]}[uu]\ar@*{[|<.5pt>]}[ur]   && 002\ar@*{[|<.5pt>]}[uu]\ar@*{[|<.5pt>]}[ll]& 
  }
$$
where nodes represent regular domains, and arrows represent the transitions imposed by the flow (or the position of focal points). We have underlined the cycle $\cal C$ using bold arrows. It is easily seen that any of the 6 regular domains not belonging to $\cal C$ contain only initial conditions which enter $\cal C$ in finite time. In fact, we can remark that the 4 domains with $a_3=2$, i.e. the half-space $x_3>\theta_3^2$ is repelling for the flow. Hence, we can restrict the study of this system to the remaining half-space, which can be done be fixing ${\sf s}^-(x_3,\theta_3^2)=1$ in the equations. Then, we immediately see that the obtained system is a negative feedback loop with three variables. It is easy to check that the focal points in the cycle satisfy assumption \ref{assum}, for instance from the table above. Thus, we can apply theorem \ref{thm-main}, and moreover, we also know from \cite{negloop,actabio} that the origin is unstable (i.e. $\lambda>1$), hence the conclusion $ii)$ holds. In brief: the cycle $\cal C$ contains a unique stable periodic orbit, which attracts all initial conditions in the 12 regular domains, as illustrated on figure \ref{fig-2loops}.
\begin{center}
\begin{figure}\begin{center}
%\begin{tabular}{cc}
\includegraphics[scale=0.4]{} %& \includegraphics[scale=0.4]{}
%\end{tabular}
\caption{\label{fig-2loops} The limit cycle resulting (represented with a thicker line than the transient regime) from the example in section \ref{sec-2loops}. Parameter values are : $\theta_1=\theta_2=\theta_3^1=1$ and $\theta_3^2=2$. $(\gamma_1,\gamma_2,\gamma_3)=(1,3,6)$. $K_1=1.3$, $K_2=5.1$, $K_3=10.8$. Initial condition: $x(0)=(1.8,1.9,2.8)$ is chosen outside the region where the cycle lies, to indicate that the basin of attraction of this cycle is the whole space.}
\end{center}\end{figure}
\end{center}

\subsection{More complex interaction graph}\label{sec-ex-curvy}
In this example, we chose a more complex interaction structure, involving multiple loops of different signs. Namely, we apply our result to the following system:\\

\begin{tabular}{cc}
$\SelectTips{eu}{11}\xymatrix @ -.5pc {*+[o][F-] {1}  \ar@*{[|<.5pt>]}[rr] \ar@*{[|<.5pt>]}@/_.8pc/[dr]  && *+[o][F-] {2}  \ar@*{[|<.5pt>]}@/_.8pc/[ll]\ar@*{[|<.5pt>]}[dl]\ar@*{[|<.5pt>]}@(r,u) \\
 & *+[o][F-] {3}\ar@*{[|<.5pt>]}@/_.8pc/[ur] \ar@*{[|<.5pt>]}[ul] \ar@*{[|<.5pt>]}@(r,d)&}
\qquad$ & 
$\left\{
\begin{array}{l}
\dot x_1(t)  =  [K_1{\sf s}^-(x_2)+ K_1'{\sf s}^+(x_2)]\:{\sf s}^-(x_3) + K_1''{\sf s}^-(x_2)\:{\sf s}^+(x_3) -\gamma_1\: x_1\\
\dot x_2(t)  =  K_2\left[{\sf s}^-(x_1)\:{\sf s}^-(x_3) + {\sf s}^+(x_1)\:{\sf s}^+(x_2)\right]-\gamma_2\: x_2\\
\dot x_3(t)  =  K_3\left[{\sf s}^-(x_1)\:{\sf s}^+(x_3) + {\sf s}^+(x_1)\:{\sf s}^+(x_2)\right]-\gamma_3\: x_3\\[3mm]
\text{with constraints  }\quad K_i>\theta_i\gamma_i,\;\; i=2,3.\\
\phantom{with constraints }\quad K_1<\theta_1\gamma_1,\;\;K_1'>\theta_1\gamma_1,\;\;K_1''>\theta_1\gamma_1.
\end{array}
\right.$
% %%C(1) = (1-Y(2))*Y(3) + Y(2)*(1-Y(3));
% C(1) = 0.2*(1-Y(2))*(1-Y(3)) + (1-Y(2))*Y(3) + 0.51*Y(2)*(1-Y(3));% torsion supp
% lÃ©mentaire
% C(2) = (1-Y(1))*(1-Y(3)) + Y(1)*Y(2);
% C(3) = (1-Y(1))*Y(3) + Y(1)*Y(2);
\end{tabular}\ \\[6mm]
whose interaction graph is depicted on the left: its arrows correspond to the interactions appearing in the production term, so that there is no explicit self loop on the node $1$. No distinction between activation and repression is made in this graph, to avoid multiple arrows. Actually, the action of $2$ on $1$ can be of both types depending on $x_3$ and similarly for several other arrows.\\ 
The step functions act with a single threshold $\theta_i$ for each variable $x_i$, not shown in the equations above. The eight parameters $K_i$ and $\gamma_i$ may take any value satisfying the consistency constraints.\\
Since there is a single threshold per variable, the regular domains of this system can be represented by $\A=\{0,1\}^3$. Up to a division by the degradation rates $\gamma_i$ the focal points are given by the following table:
$$\begin{array}{lc|c|c|c|c|c|c|c}
{\cal C}:&000 & 010 & 110 & 111 & 011 & 001 & 101 & 100 \\
\hline 
&K_1 &K_1'&K_1'&0    &0     & K_1''&K_1''&K_1 \\
&K_2 &K_2 &K_2  &K_2&0    & 0       &0      &0\\
&0     & 0    &K_3  &K_3&K_3 & K_3  &0      &0
\end{array}$$
In a more geometrical way, this cycle can be depicted as:
$$
\xymatrix @ -.5cm {&011  \ar@*{[|<1.8pt>]}[dl] && 111\ar@*{[|<1.8pt>]}[ll]  \\
001  \ar@*{[|<1.8pt>]}[rr] && 101\ar@*{[|<1.8pt>]}[dd] \ar@{.}[ur] & \\
&010  \ar@*{[|<1.8pt>]}'[r][rr] \ar@{.}[uu] && 110\ar@*{[|<1.8pt>]}[uu]   \\
000  \ar@*{[|<1.8pt>]}[ur] \ar@{.}[uu] && 100 \ar@*{[|<1.8pt>]}[ll] \ar@{.}[ur]  &}
$$
The transitions between regular domains in ${\cal A}=\{0,1\}^3$. The dotted lines represents a white walls: since no trajectory starting outside these walls can reach them, we can ignore them without difficulty.\\

In the table above, we have disposed the domains in the order followed by any trajectory under the given parameter constraints. In particular, there is a cycle $\cal C$ involving all the regular domains of this system. Finally, it is easily seen in this table that two consecutive focal points only differ in the switching direction: they are aligned in the sense require by theorem \ref{thm-main}. Moreover, the three directions are switching. Hence, our theorem applies and this system admits a unique attractor, which may be either the 'origin' $(\theta_1,\theta_2,\theta_3)$ or a stable limit cycle. We have not been able to prove that the origin is unstable, but all numerical simulations we have performed with various parameter values have led to a limit cycle, as illustrated in figure \ref{fig-curvy}.\\
\begin{center}
\begin{figure}[hbtp]\begin{center}
\includegraphics[scale=0.4]{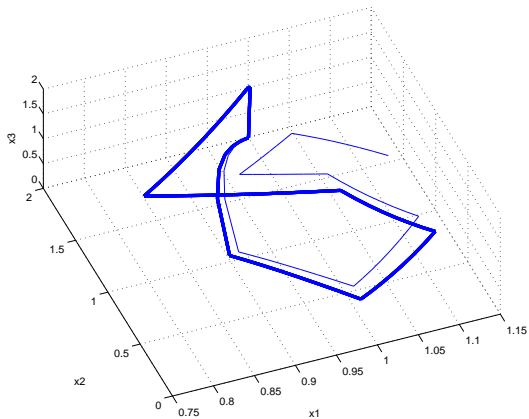} %curvy_cycle_and_transient.eps}%verycurvedcycle3d.ps}
\caption{\label{fig-curvy} An example of numerical simulation of the example from section \ref{sec-ex-curvy}. A limit cycle is observed, and represented with a thick line in the figure. Parameter values are : $\theta_i=1$ for all $i$. $(\gamma_1,\gamma_2,\gamma_3)=(1,3,6)$. $K_1=0.4$, $K_1'=1.02$, $K_1''=2$, $K_2=6$, $K_3=12$. Initial condition: $x(0)=(1.1,1.1,1.1)$.}
\end{center}\end{figure}
\end{center}
Despite its complicated interaction graph, this system is such that on any wall, each variable modifies the value of at most one focal point coordinate. This property is the origin of the alignment of focal point, and might be used to study more examples with a complex global interaction graph, which simplifies locally.\\
Our last comment on this example concerns assumption \ref{assum}. Actually, this alignment assumption could be thought to imply that focal points are vertices of a rectangular parallelepiped. However, a rapid inspection of the table above will show to the reader that this is not the case in our example, and that it is difficult to draw global conclusions from the local condition of focal point alignment.\\

\subsection{Multiple threshold values}
We provide now a simple two dimensional example involving two distinct thresholds in one direction. Namely, $x_1$ can switch at two thresholds $\theta_1^1<\theta_1^2$, and $x_2$ at a single threshold denoted $\theta_2$ (and omitted in step functions involving $x_2$). This system writes\\

\begin{tabular}{c}
$\SelectTips{eu}{11}\xymatrix {*+[o][F-] {1}  \ar@*{[|<.5pt>]}@/_.8pc/[r]\ar@*{[|<.5pt>]}@(lu,ld)  & *+[o][F-] {2}  \ar@*{[|<.5pt>]}@/_.8pc/[l]\ar@*{[|<.5pt>]}@(ru,rd)}\quad$ \\[4mm]
$\left\{
\begin{array}{l}
\dot x_1(t)  =  K_1\:[{\sf s}^-(x_1,\theta_1^1)\:{\sf s}^-(x_2) +{\sf s}^+(x_1,\theta_1^2) \:{\sf s}^+(x_2)] + K_1'\:{\sf s}^+(x_1,\theta_1^1)\:{\sf s}^-(x_2) -\gamma_1\: x_1\\[2mm]
\dot x_2(t)  =  K_2\:[ {\sf s}^+(x_1,\theta_1^1)\:{\sf s}^+(x_2) + {\sf s}^+(x_1,\theta_1^2)\:{\sf s}^-(x_2)] + K_2' -\gamma_2\: x_2\\[3mm]
%\dot x_1(t)  =  K_1\:{\sf s}^-(x_2)\:[{\sf s}^-(x_1,\theta_1^1) +{\sf s}^+(x_1,\theta_1^2) ] + K_1'\:{\sf s}^+(x_2)\:{\sf s}^+(x_1,\theta_1^2) -\gamma_1\: x_1\\[2mm]
%\dot x_2(t)  =  K_2\: {\sf s}^-(x_2)\:{\sf s}^+(x_1,\theta_1^2) + K_2' \:{\sf s}^+(x_2)\:{\sf s}^+(x_1,\theta_1^1)-\gamma_2\: x_2\\[3mm]
\text{with constraints  }\quad K_1>\theta_1^1\gamma_1,\;\;K_1'>\theta_1^2\gamma_1,\;\; K_2'<\theta_2\gamma_2,\;\;K_2+K_2'>\theta_2\gamma_2.\\
\end{array}
\right.$
\end{tabular}\ \\[6mm]
The aim of this model is to illustrate the last statement of theorem \ref{thm-main}. First, let us show the production rate values (or focal point coordinates multiplied by decay rates) in a table, as in the previous example:
$$\begin{array}{lc|c|c|c|c|c}
{\cal C}:&00 & 10 & 20 & 21 & 11 & 01 \\
\hline 
&K_1&K_1'&K_1'&K_1&0     & 0 \\
&K_2'&K_2'&K_2+K_2' &K_2+K_2'&K_2+K_2'&K_2'
\end{array}$$
Here again, a cycle $\cal C$ involving all regular domains exists for any parameter set satisfying the specified constraints. Moreover, any pair of successive focal points only differ in the switching direction, i.e. assumption \ref{assum} is verified. Hence, we may apply theorem \ref{thm-main}, and since $\theta_1^1$ and $\theta_1^2$ are both crossed in $\cal C$, we conclude that there exists a unique stable periodic orbit attracting all initial conditions. This fact is illustrated on figure \ref{fig-2cyc}.\\
\begin{center}
\begin{figure}\begin{center}
\begin{tabular}{cc}$
\xymatrix {01  \ar@*{[|<.5pt>]}[d] & 11\ar@*{[|<.5pt>]}[l] \ar@{.}@*{[|<.5pt>]}[d] & 21\ar@*{[|<.5pt>]}[l]\\
00  \ar@*{[|<.5pt>]}[r] & 10\ar@*{[|<.5pt>]}[r] & 20\ar@*{[|<.5pt>]}[u]  }$  &
\includegraphics[scale=0.4]{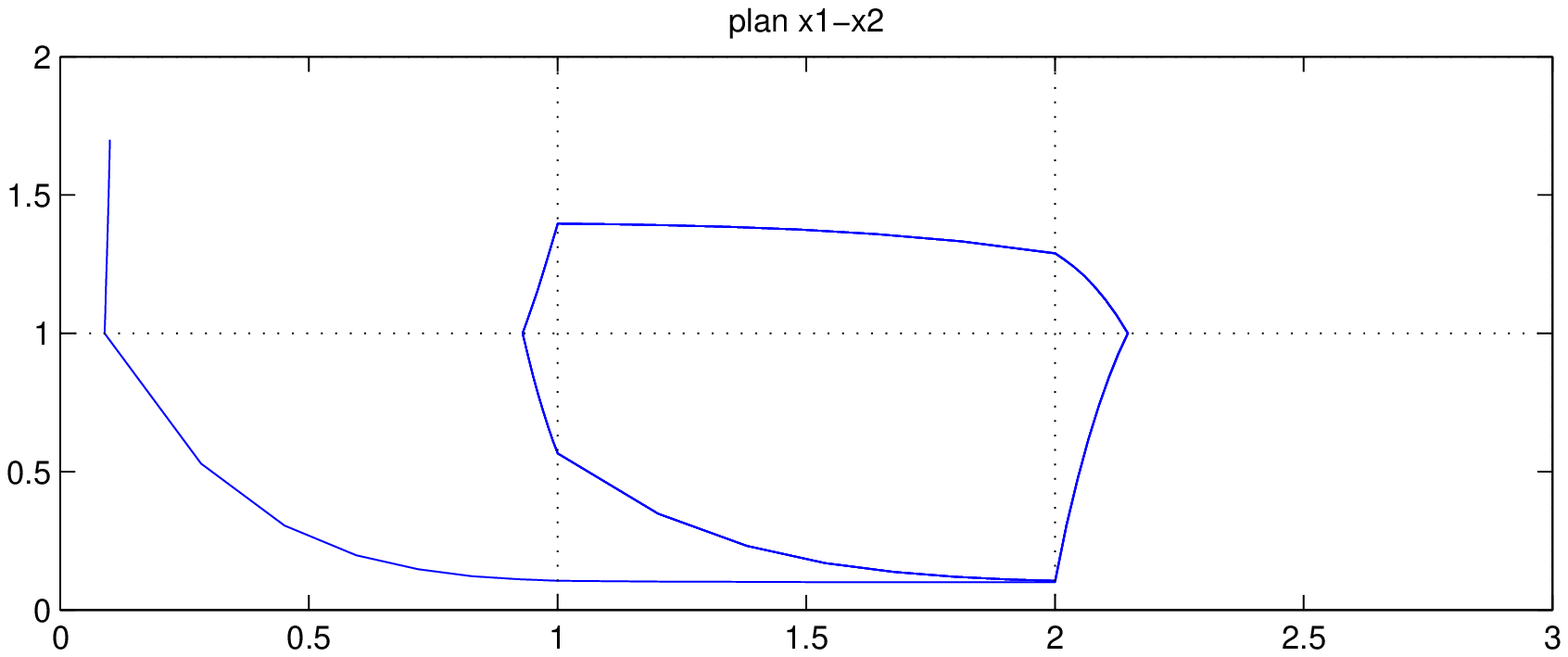}
\end{tabular}
\caption{\label{fig-2cyc} On the left: the transitions between regular domains in ${\cal A}=\{0,1,2\}\times\{0,1\}$. The dotted line represents a white wall: since no trajectory starting outside this wall can reach it, we can ignore this wall without difficulty. On the right: an example of numerical simulation of the example from section \ref{sec-ex-curvy}. A limit cycle is observed. Parameter values are : $\theta_1^1=\theta_2=1$, and $\theta_1^2=2$ (dotted lines represent these thresholds). $(\gamma_1,\gamma_2)=(1,5)$. $K_1=1.5$, $K_1'=2.7$, $K_2'=0.5$, $K_2=6.5$.}
\end{center}\end{figure}
\end{center}
We would like to stress the fact that this example has been chosen for its geometrical simplicity, which gives an easy intuition of our result. However, more complex cases could also be treated, and in particular we recall that the parallel thresholds may not be crossed successively. Let us illustrate this on a discrete transition structure, without entering into the detail of underlying differential equations. For example, if we now that all pairs of successive focal points are aligned in the bold cycle below:
$$
\xymatrix @ -.5cm {&110  \ar@*{[|<1.8pt>]}'[d][dd]\ar@{.}@*{[|<.5pt>]}[rr] && 111\ar@*{[|<1.8pt>]}[rr]  && 112\ar@*{[|<1.8pt>]}[dl]  \\
100  \ar@*{[|<1.8pt>]}[ur] && 101\ar@{.}@*{[|<.5pt>]}[ur]\ar@{.}@*{[|<.5pt>]}[rr]\ar@*{[|<1.8pt>]}[ll]  && 102\ar@*{[|<1.8pt>]}[dd]  &\\
&010  \ar@*{[|<1.8pt>]}'[r][rr] && 011\ar@*{[|<1.8pt>]}'[u][uu]  && 012 \ar@*{[|<.5pt>]}'[l][ll] \ar@*{[|<.5pt>]}[uu]  \ar@*{[|<.5pt>]}[dl] \\
000  \ar@*{[|<.5pt>]}[rr]\ar@*{[|<.5pt>]}[ur]\ar@*{[|<.5pt>]}[uu] && 001 \ar@*{[|<1.8pt>]}[uu]\ar@{.}@*{[|<.5pt>]}[ur]   && 002 \ar@*{[|<1.8pt>]}[ll]& 
  }
$$
then, we can conclude that there exists a unique stable periodic solution of the corresponding piecewise-linear differential equations, because two thresholds are crossed in the left-right direction. The above example may seem a little artificial, but it serves as an illustration of a property that holds in higher dimensional systems, whose projections in 3 dimension might be at least as complicated as the diagram above.

\section{Conclusion}\label{sec-ccl}
We have presented and proved in this paper a theorem about periodic solutions of piecewise linear models of gene regulatory networks. This theorem relates discrete transitions between regular domains of these systems and their actual solutions. It does so under hypotheses of a local nature (alignment of pairs of successive focal points), hence allowing applications to a large variety of examples, some of which have been presented in the last section. The alignment condition is related to the interaction structure of the system only locally, allowing complicated global interaction graphs to be handled within the present framework.\\
A possible follow-up of this work would concern the control of gene networks, a topic arisen recently as a tool for synthetic biology. Actually, we have shown in a previous paper \cite{automatica} that if production and degradation rates can be modified by an experimentalist (a fact modeled using input variables), then the problem of forcing a discrete transition structure in piecewise affine models could be expressed as a simple linear programming (LP) problem. We applied this to the control of steady states, but it could also be applied to the control of stable periodic orbits. Actually, if it is possible to control the discrete transition structure so that it presents a cycle, and if moreover we are able to impose that successive focal points are aligned (which amounts to adding constraints to the LP problem), then the result of this paper shows that the controlled system will have only two possible attractors: the origin or a stable limit cycle. This could be a powerful tool for regulating a system between two functioning modes.\\

%%%%%%%%%%%%%%%%%%%%%%%%%%%%%%%%%%%%%%%%%%%%%%%%%%%%%%%%%%%%%%%%%%%%%
\appendix
\noindent{\bf \Large Appendix}
\section{Proof of proposition \ref{prop-parallel}}\label{anx-proofs}
\begin{proof}
Let us deal first with the more intuitive case when the two thresholds $\theta^{(0)}_{s_0}$, $\theta^{(k)}_{s_k}$ are crossed successively, i.e. there are two consecutive parallel walls. In other terms, one assumes first that $k=1$. Then, for any $x\in W^0$, $x_{s_1}=x_{s_0}=\theta^{(0)}_{s_0}$, and eq.~(\ref{eq-loctransmap}) gives
$$
{T}^{(1)}_j(x) =  \phi^1_j + \alpha^{(1)}_j\!\left(\theta_{s_0}^{(0)}\right)\:(x_j - \phi^1_j) ,\quad j\ne s_1.
$$
It is easily checked that either $\theta^{(0)}_{s_0}<\theta^{(1)}_{s_1}<\phi^1_{s_1}$ or $\theta^{(0)}_{s_0}>\theta^{(1)}_{s_1}>\phi^1_{s_1}$, since trajectories leave $a^0$ to enter $a^1$. It follows that the scalar $ \alpha^{(1)}_j\! \left(\theta_{s_0}^{(0)}\right)\in(0,1)$. Since it does not depend on $x$, we abbreviate it into $\alpha^{(1)}_j$. Now, remember that $s_1$ is by assumption the only exit direction from $a^1$. This implies that for all $j\ne s_1$, denoting $\theta_j^\pm$ the thresholds bounding $W^0$ and $W^1$ in direction $j$, one has: $\theta_j^+>\phi_j^1>\theta_j^-$, or equivalently 
$$
\theta_j^+ - \phi_j^1>0>\theta_j^-- \phi_j^1.
$$
Multiplying by $\alpha^{(1)}$, this implies 
$$
\alpha^{(1)}\cdot [\theta_j^- - \phi_j^1,\theta_j^+ - \phi_j^1]\subset (\theta_j^- - \phi_j^1,\theta_j^+ - \phi_j^1).
$$
Since the left-hand side above is the image of $ [\theta_j^-,\theta_j^+]$ by the translated map $x\mapsto{T}^{(1)}_j(x) - \phi^1_j$, one deduces the expected inclusion: $T^{(1)} \left(W^0\right) \subset \intr{W^1}$.\\

Now suppose $k>1$.\\
Since the walls are closed, connected sets, and all maps $T^{(i)}$ are continuous, to show that some wall is mapped in the interior of another wall, it is sufficient to show that none of its point is mapped on the boundary of the target wall. To achieve this, let us first prove that the following equality holds for any $i\in\NN{\ell}$:
\begin{equation}\label{eq-induc}
 T^{(i)}\circ T^{(i-1)}\circ\cdots\circ T^{(1)} \left(W^0\right) \cap \partial W^i = \bigcap_{j=0}^i W^j.%= \bigcap_{j=0}^i \partial W^j.
\end{equation}
The proof is by induction. To initialise this induction, let us consider how the boundary of the target wall of the first transition map $T^{(1)}$ is intersected. That is, we describe 
$T^{(1)}\left( W^0 \right)\cap \partial W^1$. %From the hypothesis on $k$ one gets $s_0\ne s_1$. 
Once again it will be useful to consider the translated map:
\begin{equation}\label{eq-T1}
T_j^{(1)}(x) - \phi^1_j = \alpha^{(1)}_j\left( x_{s_1}\right)(x_j - \phi^1_j) ,\quad j\ne s_1.
\end{equation}
Let us denote $\theta_j^- < \theta_j^+$ the bounding thresholds of $W^1$ in direction $j$. Note that these are also the bounding thresholds of $W^0$, for $j\ne s_0$. We also denote $\theta_{s_1}^- < \theta_{s_1}^+$ the bounding thresholds of $W^0$ in direction $s_1$, so that $W^0=\prod_{j<s_0}[\theta_j^- , \theta_j^+] \times \{\theta_{s_0}^{(0)}\} \times  \prod_{j>s_0}[\theta_j^- , \theta_j^+]$ and $W^1=\prod_{j<s_1}[\theta_j^- ,\theta_j^+] \times \{\theta_{s_1}^{(1)}\} \times  \prod_{j>s_1}[\theta_j^- , \theta_j^+]$.\\
Now, an image point encounters the boundary of $W^1$ if and only if $T_j^{(1)}(x)=\theta_j^\pm$ for some $j\ne s_1$. Considering~(\ref{eq-T1}), this is equivalent to:
\begin{equation}\label{eq-transT1}
T_j^{(1)}(x) - \phi^1_j = \theta_j^\pm - \phi^1_j = \alpha^{(1)}_j\left( x_{s_1}\right)\:(x_j - \phi^1_j)
\end{equation}
for one of the two values of $\theta_j^\pm$. Now, observe that:
\begin{itemize}
\item since $s_1$ is the only exiting direction, one has $\phi_j\in (\theta_j^- ,\theta_j^+)$ for all $j\ne s_1$.
\item $\alpha^{(1)}_j\left( x_{s_1}\right)\in (0,1]$ for $x_{s_1}\in [\theta_{s_1}^-, \theta_{s_1}^+]$, it is a monotone function of $x_{s_1}$, and takes the value $1$ only for $x_{s_1}=\theta_{s_1}^{(1)}$.
\end{itemize}
The first observation above implies that $\theta_j^- - \phi^1_j<0< \theta_j^+ - \phi^1_j$ for $j\ne s_1$, and for any $\alpha\in(0,1)$ and $x_j\in [\theta_j^- , \theta_j^+]$ this in turn gives:
$$
\theta_j^- - \phi^1_j < \alpha\cdot(\theta_j^- - \phi^1_j) \leqslant \alpha\cdot (x_j-\phi^1_j)\leqslant \alpha\cdot (\theta_j^+ - \phi^1_j) < \theta_j^+ - \phi^1_j
$$
Hence, one sees that~(\ref{eq-transT1}) may only be satisfied if $\alpha^{(1)}_j\left( x_{s_1}\right)=1$, which from the second observation above occurs exactly for $x_{s_1}=\theta_{s_1}^{(1)}$. But this defines the hyperplane bearing the target wall $W^1$. In other words, the preimage of $\partial W^1$ by $T^{(1)}$ is
\begin{equation}\label{eq-TWdW}
W^0\cap W^1= \prod_{j=1}^n [\theta_j^- , \theta_j^+]\cap \left\{x\,|\, x_{s_0}=\theta_{s_0}^{(0)},\; x_{s_1}=\theta_{s_1}^{(1)}\right\}.
\end{equation}
Moreover, from $\alpha^{(1)}_j\left( \theta_{s_1}^{(1)}\right)=1$, the restriction of $T^{(1)}$ to $W^0\cap W^1$ is the identity. Hence, the set above is exactly $T^{(1)}\left( W^0 \right)\cap \partial W^1$.\\
Remark that the case $k=1$ treated previously could have been deduced from this property, since in this case one has parallel walls, and thus $W^0\cap W^1 = \varnothing$.\\
Under the assumption that we deal with boxes with a single outgoing direction, the argument above generalises to any local transition map as:
\begin{equation}\label{eq-TidWi}
T^{(i+1)}\left( W^{i} \right)\cap \partial W^{i+1} = W^{i}\cap W^{i+1}.
\end{equation}
 Now another useful observation is that
 \begin{equation}\label{eq-WiWi-1}
 W^{i}\cap W^{i+1} = \partial W^{i}\cap \partial W^{i+1}
 \end{equation}
 as may be seen from the explicit descriptions of each of these sets.\\
% And on the domain above, the functions $\alpha^{(i)}_j$ are identically equal to $1$,  for all $j\ne s_i$. It follows that the restriction of $T^{(i)}$ to this subset is the identity, and in particular we have that 
%  \begin{equation}\label{eq-TWiWi}
% T^{(i+1)}\left(W^{i}\cap W^{i+1}\right) = W^{i}\cap W^{i+1} = T^{(i+1)}\left(W^{i} \right)\cap\partial W^{i+1}
%  \end{equation}

Now, suppose that the induction statement (\ref{eq-induc}) holds for some $i$. Then, we have
$$.
T^{(i+1)}\circ T^{(i)}\circ\cdots\circ T^{(1)} \left(W^0\right) \cap \partial W^{i+1} \subset W^i\cap W^{i+1}
$$
by (\ref{eq-TidWi}) and the inclusion $T^{(i)}\circ\cdots\circ T^{(1)} \left(W^0\right)\subset W^i$. Then, because $T^{(i+1)}$ acts as the identity on $W^i\cap W^{i+1}$, the only points of $T^{(i)}\circ\cdots\circ T^{(1)} \left(W^0\right)$ whose image by $T^{(i+1)}$ intersects $\partial W^{i+1}$ must also lie in $W^i\cap W^{i+1}$. But this intersection is a subset of $\partial W^{i}$ by (\ref{eq-WiWi-1}). Hence, these points lie in fact in $T^{(i)}\circ T^{(i-1)}\circ\cdots\circ T^{(1)} \left(W^0\right) \cap \partial W^i$, which by the induction hypothesis equals $\bigcap_{j=0}^i W^j$. Using again the fact that $T^{(i+1)}$ acts as the identity on this set, the expected 
$ T^{(i+1)}\circ T^{(i)}\circ\cdots\circ T^{(1)} \left(W^0\right) \cap \partial W^{i+1}=\bigcap_{j=0}^{i+1} W^j$ follows.\\

Now, it is not difficult to see that~(\ref{eq-TWdW}) can be generalised to the above intersection as follows:
$$
\bigcap_{i=0}^k W^i = \prod_{j=1}^n [\theta_j^- , \theta_j^+]\cap \left\{x\,|\, x_{s_i}=\theta_{s_i}^{(i)},\; i=0\cdots k \right\}
$$
and the main assumption of this proposition implies that this set is empty.
\end{proof}

%%%%%%%%%%%%%%%%%%%%%%%%%%%%%%%%%%%%%%%%%%%%%%%%%%%%%%%%%%%%%%%%%%%%%%%%%%%%%%%%%%%%%%%%%

\end{document}